\def\NZQ{\mathbb}               
\def\ZZ{{\NZQ Z}}
\def\RR{{\NZQ R}}
\def\frk{\mathfrak}               
\def\Phi{{\frk N}}
\def\ab{{\bold a}}
\def\eb{{\bold e}}
\def\tb{{\bold t}}
\def\xb{{\bold x}}
\def\yb{{\bold y}}
\def\opn#1#2{\def#1{\operatorname{#2}}} 
\opn\chara{char} 
\opn\length{\ell} 
\opn\pd{pd} 
\opn\rk{rk}
\opn\projdim{proj\,dim} 
\opn\injdim{inj\,dim} 
\opn\rank{rank}
\opn\depth{depth} 
\opn\grade{grade} 
\opn\height{height}
\opn\embdim{emb\,dim} 
\opn\codim{codim}
\opn\Tr{Tr} 
\opn\bigrank{big\,rank}
\opn\superheight{superheight}
\opn\lcm{lcm}
\opn\trdeg{tr\,deg}
\opn\reg{reg} 
\opn\lreg{lreg} 
\opn\ini{in} 
\opn\lpd{lpd}
\opn\size{size}
\opn\mult{mult}
\opn\dist{dist}
\opn\cone{cone}
\opn\lex{lex}
\opn\rev{rev}
\opn\bipyr{bipyr}
\opn\div{div} \opn\Div{Div} \opn\cl{cl} \opn\Cl{Cl}
\opn\Spec{Spec} \opn\Supp{Supp} \opn\supp{supp} \opn\Sing{Sing}
\opn\Ass{Ass} \opn\Min{Min}
\opn\Ann{Ann} \opn\Rad{Rad} \opn\Soc{Soc}
\opn\Syz{Syz} \opn\Im{Im} \opn\Ker{Ker} \opn\Coker{Coker}
\opn\Am{Am} \opn\Hom{Hom} \opn\Tor{Tor} \opn\Ext{Ext}
\opn\End{End} \opn\Aut{Aut} \opn\id{id} \opn\ini{in}
\opn\nat{nat}
\opn\pff{pf}
\opn\Pf{Pf} \opn\GL{GL} \opn\SL{SL} \opn\mod{mod} \opn\ord{ord}
\opn\Gin{Gin}
\opn\Hilb{Hilb}\opn\adeg{adeg}\opn\std{std}\opn\ip{infpt}
\opn\Pol{Pol}
\opn\sat{sat}
\opn\Var{Var}
\opn\Gen{Gen}
\opn\aff{aff} \opn\con{conv} \opn\relint{relint} \opn\st{st}
\opn\lk{lk} \opn\cn{cn} \opn\core{core} \opn\vol{vol}
\opn\link{link} \opn\star{star}
\opn\gr{gr}
\def\Fc{{\mathcal F}}
\def\Pc{{\mathcal P}}
\def\Qc{{\mathcal Q}}
\def\vol{{\textnormal{vol}}}
\def\conv{{\textnormal{conv}}}
\def\ord{{\textnormal{ord}}}
\def\pot#1#2{#1[\kern-0.28ex[#2]\kern-0.28ex]}
\opn\dirlim{\underrightarrow{\lim}}
\opn\inivlim{\underleftarrow{\lim}}
\def\Implies{\ifmmode\Longrightarrow \else
	\unskip${}\Longrightarrow{}$\ignorespaces\fi}
\def\implies{\ifmmode\Rightarrow \else
	\unskip${}\Rightarrow{}$\ignorespaces\fi}
\def\iff{\ifmmode\Longleftrightarrow \else
	\unskip${}\Longleftrightarrow{}$\ignorespaces\fi}
\newtheorem{Theorem}{Theorem}[section]
\newtheorem{Lemma}[Theorem]{Lemma}
\newtheorem{Corollary}[Theorem]{Corollary}
\newtheorem{Proposition}[Theorem]{Proposition}
\newtheorem{Example}[Theorem]{Example}
\newtheorem{Question}[Theorem]{Question}
\numberwithin{equation}{section}
\let\epsilon\varepsilon
\let\phi=\varphi
\let\kappa=\varkappa
\def\qed{\ifhmode\textqed\fi
	\ifmmode\ifinner\quad\qedsymbol\else\dispqed\fi\fi}
\def\textqed{\unskip\nobreak\penalty50
	\hskip2em\hbox{}\nobreak\hfil\qedsymbol
	\parfillskip=0pt \finalhyphendemerits=0}
\def\dispqed{\rlap{\qquad\qedsymbol}}
\opn\dis{dis}
\opn\height{height}
\opn\dist{dist}
\def\pnt{{\raise0.5mm\hbox{\large\bf.}}}
\opn\Lex{Lex}
\opn\conv{conv}
\opn\Ehr{Ehr}
\begin{document}
\title{The depth of a reflexive polytope}
	\author[T.~Hibi]{Takayuki Hibi}
\address[Takayuki Hibi]{Department of Pure and Applied Mathematics,
	Graduate School of Information Science and Technology,
	Osaka University,
	Suita, Osaka 565-0871, Japan}
\email{hibi@math.sci.osaka-u.ac.jp}
\author{Akiyoshi Tsuchiya}
\address[Akiyoshi Tsuchiya]{Department of Pure and Applied Mathematics,
	Graduate School of Information Science and Technology,
	Osaka University,
	Suita, Osaka 565-0871, Japan}
\email{a-tsuchiya@ist.osaka-u.ac.jp}
\subjclass[2010]{13H10, 52B20}
\keywords{reflexive polytope, toric ring, depth, normal polytope, very ample polytope}
\begin{abstract}
Given arbitrary integers $d$ and $r$ with $d \geq 4$ and $1 \leq r \leq d + 1$, a reflexive polytope $\Pc \subset \RR^d$ of dimension $d$ with $\depth K[\Pc] = r$ for which its dual polytope $\Pc^\vee$ is normal will be constructed, where $K[\Pc]$ is the toric ring of $\Pc$.
\end{abstract}

\maketitle
\thispagestyle{empty}

\section*{Introduction}
A {\em lattice polytope} is a convex polytope $\Pc \subset \RR^d$ each of whose vertices belongs to $\ZZ^d$.  We say that a lattice polytope $\Pc \subset \RR^d$ of dimension $d$ is {\em reflexive} if the origin ${\bf 0}$ of $\RR^d$ belongs to the interior $\Pc \setminus \partial \Pc$ of $\Pc$ and if the dual polytope
\[
\Pc^\vee = \{ \, \xb \in \RR^d \, : \, \langle \xb, \yb \rangle \leq 1, \, \forall \yb \in \Pc \, \}
\]
of $\Pc$ is a lattice polytope, where $\langle \xb, \yb \rangle$ is the canonical inner product of $\RR^d$.  Reflexive polytopes have been studied by many articles in the frame of algebraic combinatorics (e.g., \cite{Hibi}), algebraic geometry (e.g., \cite{mirror}) and Gr\"obner bases (e.g., \cite{perfect}).  Once the dimension is fixed, there exist only finitely many reflexive polytopes up to unimodular equivalence (\cite{Lag}).  Furthermore, in \cite{Kre}, a complete classification of the reflexive polytopes of dimension $\leq 4$ is achieved.
In particular, there are one reflexive polytope in dimension one, $16$ in dimension two, $4319$ in dimension three and 473800776 in dimension four.

Let $K[\tb, \tb^{-1}, s] = K[t_1, \ldots, t_d, t_1^{-1}, \ldots, t_d^{-1}, s]$ denote the Laurent polynomial ring in $d + 1$ variables over a field $K$.  Given a lattice polytope $\Pc \subset \RR^d$, the {\em toric ring} of $\Pc$ is the subring of $K[\tb, \tb^{-1}, s]$ which is generated by those Laurent monomials $t_1^{a_1}\cdots t_d^{a_d}s$ with $(a_1, \ldots, a_d) \in \Pc \cap \ZZ^d$.  One has {\rm Krull-dim}\,$K[\Pc] = \dim \Pc + 1$.  Let $\depth K[\Pc]$ denote the depth of the toric ring $K[\Pc]$.  
Please refer the reader to, e.g., \cite{BrunsHerzog} for the detailed information about depth.
The topic we are interested in  the behavior of $\depth K[\Pc]$ of a reflexive polytope $\Pc \subset \RR^d$.  When a lattice polytope $\Pc$ is reflexive with $\dim \Pc \leq 3$, its toric ring $K[\Pc]$ is (normal and) Cohen--Macaulay (e.g., \cite[Theorem 3.36]{unimodular}).  Hence $\depth K[\Pc] = \dim \Pc + 1$. 

In general, a lattice polytope $\Pc \subset \RR^d$ is called {\em normal} if for each $N = 1, 2, \ldots$ and for each $\ab \in N \Pc \cap \ZZ^d$, there exist $\ab_1, \ldots, \ab_N \in \Pc \cap \ZZ^d$ with $\ab = \ab_1 + \cdots + \ab_N$, where $N \Pc = \{ \, N \ab \, ; \, \ab \in \Pc \, \}$ is the $N$th dilation of $\Pc$.  If $\Pc$ is normal, then $K[\Pc]$ is normal.   

Now, our contribution to the study on reflexive polytopes is the following.

\begin{Theorem}
\label{Boston}
Given arbitrary integers $d$ and $r$ with $d \geq 4$ and $1 \leq r \leq d + 1$, there exists a reflexive polytope $\Pc \subset \RR^d$ of dimension $d$ with $\depth K[\Pc] = r$ for which its dual polytope $\Pc^\vee$ is normal.
\end{Theorem}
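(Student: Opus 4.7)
I will prove the theorem by explicit construction, exploiting the free-sum operation $\Pc \oplus \Qq := \conv\bigl((\Pc \times \{\mathbf{0}\}) \cup (\{\mathbf{0}\} \times \Qq)\bigr)$ on reflexive polytopes as the principal assembly tool. This operation sends reflexive polytopes to reflexive polytopes and satisfies $(\Pc \oplus \Qq)^\vee = \Pc^\vee \times \Qq^\vee$; since Cartesian products of normal lattice polytopes are normal, dual-normality is automatically preserved under free sums of normal-dual building blocks.

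The technical engine is the depth-shift identity
\[
\depth K[\Pc \oplus [-1,1]] \;=\; \depth K[\Pc] + 1
\]
valid for every reflexive $\Pc$. My plan to prove this is a direct computation of local cohomology on the semigroup ring via Ishida's complex: the Ishida complex of $K[\Pc \oplus [-1,1]]$ decomposes in a controlled way in terms of that of $K[\Pc]$ (reflecting the two apex lattice points $\pm \eb_{d+1}$), and one checks that the degree of the lowest nonvanishing cohomology increases by exactly one. Granted this identity, the theorem reduces to producing a short list of base reflexive polytopes with prescribed small depth and normal duals: (i) for each $e \geq 4$, an $e$-dimensional $\Pc_e^\flat$ with $\depth K[\Pc_e^\flat] = 1$, and (ii) $4$-dimensional reflexive polytopes $\Qq_4^{(s)}$ with $\depth K[\Qq_4^{(s)}] = s$ for $s = 2,3,4$ (for $s=5$ one may take the cross-polytope $\conv(\pm \eb_1,\ldots,\pm \eb_4)$). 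For given $d \geq 4$ and $1 \leq r \leq d+1$ we then set
\[
\Pc \;=\; \begin{cases} \conv(\pm \eb_1,\ldots,\pm \eb_d) & \text{if } r = d+1,\\ \Qq_4^{(r-d+4)} \oplus [-1,1]^{\oplus (d-4)} & \text{if } d-2 \leq r \leq d,\\ \Pc_{d-r+1}^\flat \oplus [-1,1]^{\oplus (r-1)} & \text{if } 1 \leq r \leq d-3. \end{cases}
\]
Reflexivity and dual-normality follow from closure of these properties under free sums, while iterated application of the depth-shift identity yields $\depth K[\Pc] = r$ in each case.

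\textbf{Main obstacle.} The substantive work is the construction and verification of the base reflexive polytopes in (i) and (ii). Each must be specified by explicit vertices; reflexivity is checked by writing down the dual as a lattice polytope; non-Cohen--Macaulayness is witnessed by producing an indecomposable lattice point in a small dilate of $\Pc$ (forcing $K[\Pc]$ to be non-normal); and --- the delicate point --- pinning the depth down to the exact value rather than merely bounding it away from $d+1$ requires an explicit computation of specific local cohomology modules via Ishida's complex applied to the cones over the faces of $\Pc$. The depth-shift identity is the other non-trivial ingredient, but the product-like combinatorial structure of the lattice-point semigroup of $\Pc \oplus [-1,1]$ should permit a clean and essentially formal Ishida-complex argument.
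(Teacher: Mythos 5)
Your assembly scheme is, up to relabeling, the paper's own: the free sum $\Pc \oplus [-1,1]$ \emph{is} the bipyramid $\bipyr(\Pc)$ used in the paper, your duality $(\Pc \oplus \Qq)^\vee = \Pc^\vee \times \Qq^\vee$ is the paper's Lemma \ref{DUAL}, and your depth-shift identity is the paper's Lemma \ref{bipyr}. (On that identity: the paper's proof is a one-liner you might prefer to your Ishida-complex sketch --- $K[\bipyr(\Pc)] \cong K[\Pc][y,z]/(yz - s^2)$, and killing the regular element $yz - s^2$ in $K[\Pc][y,z]$ drops depth by exactly one. Your Ishida route is plausible but is asserted, not carried out.)

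The genuine gap is that everything you label ``the main obstacle'' is in fact the entire content of the theorem, and your proposal contains no mechanism that could close it. Two distinct problems. First, the finite list (ii): 4-dimensional reflexive polytopes of depth $2,3,4$ with normal duals do not come from any general principle --- the paper obtained them (Example \ref{EX}) by a computer search through the Kreuzer--Skarke database, checking normality with Normaliz and depth with Macaulay2; you give no candidates and no method beyond ``explicit vertices.'' Second, and more structurally damaging, the infinite family (i): you need a depth-$1$ reflexive polytope with normal dual in \emph{every} dimension $e \geq 4$, but your only dimension-raising tool, the free sum with $[-1,1]$, strictly \emph{increases} depth by one, so it can never manufacture depth-$1$ examples above dimension $4$; and an infinite family cannot be disposed of ``by explicit vertices'' one dimension at a time. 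The paper needs a completely different device here, absent from your proposal: a non-normal but \emph{very ample} reflexive $4$-polytope (Lemma \ref{ex:non-nor}, whose very ampleness is itself a delicate facet-by-facet argument relying on Higashitani's work), the observation that Cartesian product with $[-1,1]^{e-4}$ preserves non-normality, very ampleness, and reflexivity (Corollary \ref{Oberwolfach}), and Katth\"an's theorem (Lemma \ref{germany}) that a non-normal very ample polytope spanning the lattice has $\depth K[\Pc] = 1$. Without the very-ampleness notion and Katth\"an's result, or some substitute that pins depth to exactly $1$ uniformly in the dimension, your case $1 \leq r \leq d-3$ cannot be completed.
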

   

Furthermore, we yield to the temptation to present the following.

\begin{Question}
\label{Sydney}
Given arbitrary integers $d, \, q$ and $r$ with $d \geq 4$ and $1 \leq q \leq r \leq d + 1$, does there exist a reflexive polytope $\Pc \subset \RR^d$ of dimension $d$ for which $\depth K[\Pc] = q$ and $\depth K[\Pc^\vee] = r$\,{\rm ?}
\end{Question}
The question is already open for $d=4$ and could be resolved in that dimension by computing the depth of all $4.10^8$ reflexive polytopes of that dimension.

A brief outline of the present paper is as follows.  First, a non-normal very ample reflexive polytope of dimension $4$ will be discovered (Example \ref{ex:non-nor}) and we will construct a non-normal very ample reflexive polytope for arbitrary dimension (Corollary \ref{Oberwolfach}).  As a result, it follows that a reflexive polytope $\Pc \subset \RR^d$ of dimension $d$ with ${\rm depth} K[\Pc]=1$ exists if $d \geq 4$ (Corollary \ref{depth1}). 
In Section \ref{sec:2}, for a proof of Theorem \ref{Boston}, we give indispensable examples (Example \ref{EX}) by computational experiments.
 Finally, in Section \ref{sec:3}, based on the computational results, a proof of Theorem \ref{Boston} will be given.  
In order to prove Theorem \ref{Boston}, we will focus on well-known higher-dimensional construction of reflexive polytopes, which is called {\em bipyramid construction}.
It is known that the bipyramid of a reflexive polytope $\Pc$ is normal if and only if $\Pc$ is normal.
In the present paper, we will also determine when the bipyramid of a reflexive polytope is very ample (Proposition \ref{KIX}). In addition, we can prove a similar result for lattice pyramid construction (Proposition \ref{KIX2}). 
\subsection*{Acknowledgments}
We are very grateful to the anonymous referees for their insightful reports that led to significant improvements of the form of the paper. The second author was partially supported by Grant-in-Aid for JSPS Fellows 16J01549.

\section{Non-normal very ample reflexive polytopes}
We say that a lattice polytope $\Pc \subset \RR^d$ is {\em very ample} (\cite{Cox}) if for $N \gg 0$ and for each $\ab \in N \Pc \cap \ZZ^d$, there exist $\ab_1, \ldots, \ab_N \in \Pc \cap \ZZ^d$ with $\ab = \ab_1 + \cdots + \ab_N$.  Clearly every normal polytope is very ample.

\begin{Lemma}
	\label{ex:non-nor}
The reflexive polytope $\Pc \subset \RR^4$ of dimension $4$ which is the convex hull of the column vectors of 
	\begin{displaymath}
		\begin{bmatrix}
		0 & 1 & 0 & 0 &1&0&1&1&-1\\
		0&0&1&0&0&1&1&1&-1\\
		0&0&0&1&1&1&4&5&-3\\
		1&1&1&1&1&1&1&1&-2
	\end{bmatrix}
	\end{displaymath}
is non-normal very ample.
\end{Lemma}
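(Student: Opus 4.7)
The polytope $\Pc$ is a pyramid in $\RR^4$: the fourth coordinate of $v_1,\dots,v_8$ is $1$ and of $v_9$ is $-2$, so $\Pc$ is the join of the apex $v_9$ with the $3$-dimensional lattice polytope $Q:=\conv(v_1,\dots,v_8)$ sitting in the hyperplane $\{x_4=1\}$. I would prove reflexivity, non-normality, and very ampleness of $\Pc$ in turn.

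\textbf{Reflexivity.} The origin lies in the interior of $\Pc$ by virtue of the convex identity
\[
\mathbf{0} \;=\; \tfrac{1}{6}\bigl(v_2+v_3+v_4+v_8+2v_9\bigr),
\]
whose five vertices are easily checked to be affinely independent, so the combination lies in the interior of a $4$-simplex contained in $\Pc$. I would then compute the facet description of $Q\subset\RR^3$, which has eight facets (four triangles and four quadrilaterals); the facets of $\Pc$ are $Q$ itself together with the eight lateral facets $\conv(F,v_9)$ as $F$ runs over the facets of $Q$. A direct calculation shows that each of these nine supporting hyperplanes has the form $\langle\mathbf{a},\mathbf{x}\rangle=1$ with $\mathbf{a}\in\ZZ^4$ primitive (for example the base gives $(0,0,0,1)$, the lateral facet over $\conv(v_2,v_3,v_7)$ gives $(4,4,-1,-3)$, and the lateral facet over $\conv(v_2,v_5,v_7,v_8)$ gives $(3,0,0,-2)$); reflexivity follows.

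\textbf{Non-normality.} A finite case analysis shows that $\Pc\cap\ZZ^4$ consists of $12$ points: the nine vertices, the origin, and the two extra boundary lattice points $(0,0,-1,0)$ and $(0,0,1,0)$, which appear as the centroids of the triangular $2$-faces $\conv(v_2,v_3,v_9)$ and $\conv(v_4,v_8,v_9)$, respectively. I would then exhibit the point $(1,1,3,2)$ as a witness: it lies in $2\Pc$ because $(1/2,1/2,3/2,1)\in\Pc$, but it cannot be written as $v_i+v_j$ for any two lattice points of $\Pc$. Indeed, the only way to reach fourth coordinate $2$ from the value set $\{1,0,-2\}$ is to take both summands at height $x_4=1$, i.e.\ among $v_1,\dots,v_8$; and a direct inspection shows that no two of the eight base vertices sum to $(1,1,3)$ in the first three coordinates. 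This gives non-normality.

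\textbf{Very ampleness.} I would apply the vertex-local criterion: $\Pc$ is very ample iff at every vertex $v$ the affine semigroup $\NN\{u-v:u\in\Pc\cap\ZZ^4\}$ agrees with the integer points $\ZZ^4\cap\RR_{\ge 0}(\Pc-v)$ of the tangent cone outside a finite set. At each of the nine vertices I would compute the tangent cone and its Hilbert basis and verify the finiteness of the gap. The two extra lattice points $(0,0,\pm 1,0)$ are decisive here: translated to the various tangent cones, they supply additional generators that realize the Hilbert basis elements beyond the edge directions. The main technical obstacle is precisely to show that the gap is bounded—non-normality of $\Pc$ implies that some local semigroup strictly misses its saturation, and one must rule out an infinite family of missing elements stretching out to infinity in some direction of the tangent cone. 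The stretched coordinates $v_7=(1,1,4,1)$ and $v_8=(1,1,5,1)$ are engineered precisely so that, after including the extra lattice points, the defect at every vertex cone is finite; in practice this finiteness check is carried out by a computer-algebra package such as Normaliz.
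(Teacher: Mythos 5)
Your reflexivity and non-normality arguments are essentially the paper's own (the nine facet hyperplanes of the form $\langle \mathbf{a},\mathbf{x}\rangle =1$, the twelve lattice points, and the witness $(1,1,3,2)\in 2\Pc\cap\ZZ^4$ that is not a sum of two lattice points), and they are correct. The gap is in the very ampleness part, which is where all the real work lies. First, your local criterion is misstated: very ampleness is equivalent to every corner semigroup $S_v=\NN(\Pc\cap\ZZ^4-v)$ being \emph{saturated}, i.e.\ equal to $\RR_{\ge 0}(\Pc-v)\cap\ZZ^4$, not merely having finite complement in it. Indeed, if some lattice point $e$ of the tangent cone at $v$ is missing from $S_v$, then for every large $N$ the point $Nv+e$ lies in $N\Pc\cap\ZZ^4$ but cannot be a sum of $N$ lattice points of $\Pc$ (such a sum $Nv+e=\sum_j u_j$ would give $e=\sum_j(u_j-v)\in S_v$); so a non-empty gap, finite or not, already destroys very ampleness. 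This makes your central claim, that ``non-normality of $\Pc$ implies that some local semigroup strictly misses its saturation,'' false --- if it were true, very ample non-normal polytopes could not exist and the lemma itself would be false. The actual phenomenon is the opposite: all nine corner semigroups of $\Pc$ are saturated, and the hole $(1,1,3,2)$ lives only in the graded semigroup generated by $\{(\ab,1):\ab\in\Pc\cap\ZZ^4\}$, because from each vertex $v$ the point $(1,1,3,2)-2v$ does decompose into generators, but never into just two (e.g.\ from $v_1=(0,0,0,1)$ one needs the three generators $v_4-v_1,\,v_5-v_1,\,v_6-v_1$). Your ``main technical obstacle'' of ruling out infinite families of missing elements is therefore a phantom, and a check that only confirmed \emph{finiteness} of the gaps would prove nothing.

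Second, even after correcting the criterion to saturation, your proposal never carries out the verification: the decisive step is delegated wholesale to Normaliz, so nothing is actually proved. The paper's proof avoids corner semigroups entirely and is complete modulo a finite routine check and a citation: it writes $\Pc=\bigcup_{i=0}^{8}\Pc_i$ with $\Pc_i=\conv(\Fc_i\cup\{\mathbf{0}\})$ the pyramid over the $i$th facet with apex the origin, checks that $\Pc_1,\ldots,\Pc_8$ are normal, invokes Higashitani's theorem for the base facet $\Fc_0$ (every lattice point of $k\Fc_0$ with $k\ge 3$ decomposes into $k$ points of $\Fc_0$, and for $k=2$ the only exception is $(1,1,3,2)$), and then disposes of the exceptional point explicitly via
\[
(1,1,3,2)=(1,1,4,1)+(0,0,0,1)+(0,0,-1,0)+\underbrace{\mathbf{0}+\cdots+\mathbf{0}}_{N-3},
\]
which is available exactly because $\mathbf{0}$ and $(0,0,-1,0)$ are lattice points of $\Pc$. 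If you want a proof along your lines, state the criterion as saturation at each vertex, and verify (and record) that the Hilbert basis of each of the nine tangent cones lies in the corresponding corner semigroup.
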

\setcounter{MaxMatrixCols}{20}
\begin{proof}
The equations of the supporting hyperplanes which define the facets of $\Pc$ are 
\begin{eqnarray*}
x_4=1, 
& -3x_1+x_4=1, &
-3x_2+x_4=1,\\
-x_3+x_4=1,
& 4x_1+4x_2-x_3-3x_4=1, &
3x_1-2x_4=1,\\
3x_2-2x_4=1,
& -4x_1+x_3=1, &
-4x_2+x_3=1.
\end{eqnarray*}
Hence $\Pc^\vee$ is a lattice polytope.  Thus $\Pc$ is reflexive.

	
One can easily see that $\Pc \cap \ZZ^4$ coincides with the set of column vectors of   
\begin{displaymath}
\begin{bmatrix}
0&1&0&0&1&0&1&1&-1&0&0&0\cr
0&0&1&0&0&1&1&1&-1&0&0&0 \cr
0&0&0&1&1&1&4&5&-3&0&1&-1 \cr
1&1&1&1&1&1&1&1&-2&0&0&0 \cr
\end{bmatrix}.
\end{displaymath}
Since 
\[
(1,1,3,2) = \frac{7(0,0,0,1) + 3(1,0,0,1) + 3(0,1,0,1) + 5(1,1,4,1) + 2(1,1,5,1)}{10},
\] 
it follows that $(1,1,3,2)$ belongs to $2 \Pc \cap \ZZ^4$.  However, $(1,1,3,2)$ cannot be written as the sum of two column vectors of the above matrix.  Thus $\Pc$ cannot be normal.

We now claim $\Pc$ is very ample.  Let $\Fc_0, \Fc_1, \ldots,\Fc_8$ denote the facets of $\Pc$, where $\Fc_0$ is the facet arising from the supporting hyperplane with the equation $x_4 = 1$.  Let $\Pc_i=\conv(\Fc_i \cup \{ {\bf 0}\})$  for $0 \leq i \leq 8$.  One has $N\Pc \cap \ZZ^4=\bigcup_{0 \leq i \leq 8}N\Pc_i \cap \ZZ^4$.  Let $N \geq 3$ and $\ab \in \ZZ^4$ belong to $N\Pc \cap \ZZ^4$.  A routine work says that each of $\Pc_1, \ldots, \Pc_8$ is normal.  Hence, if $\ab \in \bigcup_{1 \leq i \leq 8}N\Pc_i \cap \ZZ^4$, then there exist $\ab_1, \ldots, \ab_N \in \Pc \cap \ZZ^d$ with $\ab = \ab_1 + \cdots + \ab_N$.  Let $\ab \in N\Pc_0 \cap \ZZ^4$.  Since 
$
N\Pc_0 \cap \ZZ^4=\bigcup_{0 \leq k \leq N}k\Fc_0 \cap \ZZ^4,
$
there is $1 \leq k \leq N$ with $\ab \in k\Fc_0 \cap \ZZ^4$.  Now, it is shown \cite{Hig} that, for $3 \leq k \leq N$, there exist $\ab_1,\ldots,\ab_{k} \in \Fc_0 \cap \ZZ^4$ with $\ab=\ab_1+\cdots+\ab_{k}$ and that, for $k = 2$ with $\ab \neq (1,1,3,2)$, there exist $\ab_1, \ab_2 \in \Fc_0 \cap \ZZ^4$ with $\ab = \ab_1 + \ab_2$.  Since ${\bf 0} \in \Pc$, it follows that, if $\ab \neq (1,1,3,2)$, there exist $\ab_1, \ldots, \ab_N \in \Pc_0 \cap \ZZ^d$ with $\ab = \ab_1 + \cdots + \ab_N$. Finally,
\[
(1,1,3,2)=(1,1,4,1)+(0,0,0,1)+(0,0,-1,0)+\underbrace{{\bf 0}+\cdots+{\bf 0}}_{N-3}.
\]
As a result, for each $N \geq 3$ and for each $\ab \in N \Pc \cap \ZZ^d$, there exist $\ab_1, \ldots, \ab_N \in \Pc \cap \ZZ^d$ with $\ab = \ab_1 + \cdots + \ab_N$. Thus $\Pc$ is very ample, as desired.
\end{proof}


\begin{Corollary}
\label{Oberwolfach}
A non-normal very ample reflexive polytope $\Pc \subset \RR^d$ of dimension $d$ exists if $d \geq 4$.
\end{Corollary}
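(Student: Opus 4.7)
The plan is to extend the polytope $\Pc_0\subset\RR^4$ produced in Lemma~\ref{ex:non-nor} to higher dimensions by Cartesian product, which (unlike the bipyramid) preserves all three desired features at once. Concretely, for $d\geq 5$ I would set
\[
\Pc \;=\; \Pc_0 \,\times\, [-1,1]^{d-4} \;\subset\; \RR^d,
\]
and for $d=4$ take $\Pc=\Pc_0$. Since $[-1,1]\subset\RR$ is a self-dual reflexive polytope, its $(d-4)$-fold product $[-1,1]^{d-4}$ is reflexive; the product of two reflexive polytopes is itself reflexive because the dual of a product is the free sum of the two duals, which inherits lattice vertices from both factors, and the origin lies in the interior of the product.

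Next I would verify non-normality by recycling the obstruction that witnesses non-normality of $\Pc_0$ itself: the lattice point $(1,1,3,2,0,\ldots,0)$ lies in $2\Pc\cap\ZZ^d$ but cannot be written as a sum of two lattice points of $\Pc$, since any such decomposition would, after projection onto the first four coordinates, yield a decomposition of $(1,1,3,2)$ into two lattice points of $\Pc_0$, contradicting Lemma~\ref{ex:non-nor}. For very ampleness I would decompose coordinate-wise: given $N\geq 3$ and $(\ab,\bb)\in N\Pc\cap\ZZ^d = (N\Pc_0\cap\ZZ^4)\times(N[-1,1]^{d-4}\cap\ZZ^{d-4})$, Lemma~\ref{ex:non-nor} supplies $\ab=\sum_{i=1}^{N}\ab_i$ with $\ab_i\in\Pc_0\cap\ZZ^4$, and normality of the cube supplies $\bb=\sum_{i=1}^{N}\bb_i$ with $\bb_i\in[-1,1]^{d-4}\cap\ZZ^{d-4}$; the pairs $(\ab_i,\bb_i)$ then decompose $(\ab,\bb)$ into $N$ lattice points of $\Pc$.

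The point that genuinely requires care is the choice of extension. The naive iterated bipyramid does \emph{not} work here: the obstructing point $(1,1,3,2)$ of $\Pc_0$, placed near the top of a bipyramid, produces a lattice point of $N\bipyr(\Pc_0)$ that admits no decomposition into $N$ lattice points for \emph{any} $N$, because the only available integer heights are $0$ and $\pm 1$ and a simple height-count forces exactly two height-$0$ summands from $\Pc_0\times\{0\}$, reducing to the forbidden two-term decomposition. Taking the product with the reflexive cube $[-1,1]^{d-4}$ is the clean substitute, since decompositions split cleanly over the two coordinate blocks and the cube is normal.
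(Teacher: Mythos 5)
Your proposal is correct and follows essentially the same route as the paper: the paper also takes $\Qc = \Pc_0 \times [-1,1]^{d-4}$, verifies reflexivity by noting that the dual is the free sum of $\Pc_0^\vee$ with the cross-polytope (vertices $\pm\eb_5,\ldots,\pm\eb_d$), and deduces non-normality and very ampleness from the fact that decompositions of $(\ab,x_5,\ldots,x_d) \in N\Qc \cap \ZZ^d$ correspond exactly to decompositions of $\ab \in N\Pc_0 \cap \ZZ^4$. Your write-up merely makes explicit the projection argument and the coordinate-wise splitting that the paper states tersely, so there is no substantive difference.
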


\begin{proof}
Let $\Pc \subset \RR^4$ denote the lattice polytope of Lemma \ref{ex:non-nor}.  We claim 
\[
\Qc = \Pc \times [-1,1]^{d-4} \subset \RR^d
\] 
is a non-normal very ample reflexive polytope of dimension $d$.  Let $V \subset \ZZ^d$ be the set of vertices of the dual polytope $\Pc^\vee$.  Then the set of vertices of $\Qc^{\vee}$ is 
\[
\{ \, (\ab, 0, \ldots, 0) \in \RR^{d} \, : \, \ab \in V \, \} \cup \{\pm\eb_5,\ldots,\pm\eb_d \},
\]
where $\eb_1, \ldots, \eb_d$ are the canonical unit coordinate vectors of $\RR^d$.  In particular $\Qc^\vee$ is a lattice polytope.  Hence $\Qc \subset \RR^{d}$ is a reflexive polytope of dimension $d$.
  
Now, given an integer $N > 0$, one has
\[
N\Qc \cap \ZZ^d = \{ \, (\ab,a_5,\ldots,a_d) \in \ZZ^d \, : \, \ab \in N\Pc \cap \ZZ^4, -N \leq a_5,\ldots,a_d \leq N \, \}.
\]
It then follows that $\ab'=(\ab,x_5,\ldots,x_d) \in N\Qc \cap \ZZ^d$ is the sum of $N$ points belonging to $\Qc \cap \ZZ^d$ if and only if $\ab \in N\Pc \cap \ZZ^4$ is the sum of $N$ points belonging to $\Pc \cap \ZZ^4$.  As a result, since $\Pc \subset \RR^4$ is very ample, so is $\Qc \subset \RR^d$, as required.
\end{proof}

A crucial fact of the toric ring of a non-normal very ample polytope is

\begin{Lemma}[{\cite[Theorem 5.2]{Kat}}]
\label{germany}
Let $\Pc \subset \RR^d$ be a lattice polytope of dimension $d$ satisfying the condition
\begin{equation}
\label{span}
\ZZ^{d+1}= \sum_{\ab \in \Pc \cap \ZZ^d}\ZZ(\ab,1).
\end{equation}
	If $\Pc \subset \RR^d$ is a non-normal very ample polytope, then
	$\depth K[\Pc] = 1$.
\end{Lemma}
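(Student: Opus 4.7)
The plan is to prove the lemma by computing local cohomology of $K[\Pc]$ with respect to the irrelevant graded maximal ideal $\mm$, and to read off the depth from the vanishing of $H^0_\mm$ and the non-vanishing of $H^1_\mm$. The key idea is to compare $K[\Pc]$ with its normalization via a short exact sequence and a standard long exact sequence in local cohomology.

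First, I would identify $K[\Pc]$ with the semigroup ring $K[\NN S]$, where
$$S = \{(\ab,1) \in \ZZ^{d+1} \: \ab \in \Pc \cap \ZZ^d\}.$$
Hypothesis (\ref{span}) says precisely that $S$ generates $\ZZ^{d+1}$ as a group, so the normalization of $K[\Pc]$ is the semigroup ring $\overline{K[\Pc]} = K[\cone(S) \cap \ZZ^{d+1}]$. A direct check shows that $\Pc$ is very ample if and only if the set of ``holes'' $H := (\cone(S) \cap \ZZ^{d+1}) \setminus \NN S$ is finite, and the non-normality assumption says $H \neq \emptyset$.

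Next, I would invoke Hochster's theorem: $\overline{K[\Pc]}$, being the semigroup ring of a normal affine semigroup, is Cohen--Macaulay of Krull dimension $d+1 \geq 2$. In particular $H^0_\mm(\overline{K[\Pc]}) = H^1_\mm(\overline{K[\Pc]}) = 0$. Now consider the short exact sequence of graded $K[\Pc]$-modules
$$0 \longrightarrow K[\Pc] \longrightarrow \overline{K[\Pc]} \longrightarrow Q \longrightarrow 0,$$
in which the cokernel $Q$ has a $K$-basis indexed by $H$, hence is a finite-dimensional $K$-vector space. Because every element of $Q$ is annihilated by a sufficiently high power of $\mm$, we obtain $H^0_\mm(Q) = Q \neq 0$ and $H^i_\mm(Q) = 0$ for all $i \geq 1$.

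The associated long exact sequence in local cohomology then contains
$$0 = H^0_\mm(\overline{K[\Pc]}) \longrightarrow H^0_\mm(Q) \longrightarrow H^1_\mm(K[\Pc]) \longrightarrow H^1_\mm(\overline{K[\Pc]}) = 0,$$
which forces $H^1_\mm(K[\Pc]) \cong Q \neq 0$. On the other hand, $K[\Pc]$ is an integral domain of positive Krull dimension, so $\mm$ contains a nonzerodivisor and $H^0_\mm(K[\Pc]) = 0$. Combining these two facts yields $\depth K[\Pc] = 1$, as claimed. The main substantive ingredients — and the step I expect to be the principal obstacle if one were rediscovering the argument from scratch — are the identification of $\overline{K[\Pc]}$ with the full saturation $K[\cone(S) \cap \ZZ^{d+1}]$ (requiring both (\ref{span}) and very ampleness) together with Hochster's Cohen--Macaulayness theorem; once these are assembled, the long exact sequence delivers the conclusion at once.
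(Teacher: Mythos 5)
Your proof is correct. There is, however, nothing in the paper to compare it against: the paper does not prove this lemma, but quotes it directly from Katth\"an \cite[Theorem 5.2]{Kat}. What you have written is essentially a reconstruction of the argument in that reference, and every step goes through: the paper's definition of very ample says precisely that there are no holes at heights $N \gg 0$, and since each height carries only finitely many holes, very ampleness is equivalent to finiteness of $H = (\cone(S) \cap \ZZ^{d+1}) \setminus \NN S$; non-normality gives $H \neq \emptyset$; Hochster's theorem makes $\overline{K[\Pc]}$ Cohen--Macaulay of dimension $d+1 \geq 2$; and the long exact sequence of local cohomology applied to $0 \to K[\Pc] \to \overline{K[\Pc]} \to Q \to 0$ yields $H^1_\mm(K[\Pc]) \cong Q \neq 0$, while $H^0_\mm(K[\Pc]) = 0$ because $K[\Pc]$ is a domain (or simply because it embeds in $\overline{K[\Pc]}$). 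Two small points of precision. First, the identification $\overline{K[\Pc]} = K[\cone(S) \cap \ZZ^{d+1}]$ uses \emph{only} hypothesis (\ref{span}): the normalization of an affine semigroup ring is always the saturation of $\NN S$ inside its group of differences $\ZZ S$, and (\ref{span}) is what gives $\ZZ S = \ZZ^{d+1}$; very ampleness is not needed for this identification, contrary to your closing remark --- it enters only to make $H$ finite, hence $Q$ a module of finite length. Second, you should say a word about why Cohen--Macaulayness of $\overline{K[\Pc]}$ as a ring gives vanishing of $H^0_\mm$ and $H^1_\mm$ computed with respect to the graded maximal ideal $\mm$ of $K[\Pc]$: since $\overline{K[\Pc]}/\mm\overline{K[\Pc]}$ is a finite-dimensional graded algebra, $\sqrt{\mm\overline{K[\Pc]}}$ is the graded maximal ideal of $\overline{K[\Pc]}$, so independence of the base ring identifies the two local cohomologies. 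Neither point is a gap in substance, only in bookkeeping.
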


\begin{Corollary}
\label{depth1}
A reflexive polytope $\Pc \subset \RR^d$ of dimension $d$ with $\depth K[\Pc]=1$ exists if $d \geq 4$.
\end{Corollary}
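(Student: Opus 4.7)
The plan is to combine Corollary~\ref{Oberwolfach} with Lemma~\ref{germany}. Let $\Qc \subset \RR^d$ denote the non-normal very ample reflexive polytope of dimension $d$ produced in Corollary~\ref{Oberwolfach}. Since Lemma~\ref{germany} guarantees $\depth K[\Qc] = 1$ as soon as $\Qc$ is non-normal very ample \emph{and} satisfies the spanning condition \eqref{span}, namely
\[
\ZZ^{d+1} = \sum_{\ab \in \Qc \cap \ZZ^d} \ZZ(\ab,1),
\]
the only point that requires attention is \eqref{span}.

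To check it, I would first exploit that $\Qc$ is reflexive, so $\mathbf{0} \in \Qc \cap \ZZ^d$, placing $(\mathbf{0},1)$ among the generators on the right-hand side. Hence it suffices to show that $\Qc \cap \ZZ^d$ generates $\ZZ^d$ as an abelian group. For the four-dimensional polytope $\Pc$ of Lemma~\ref{ex:non-nor}, the first four columns of the displayed matrix are $\eb_4,\; \eb_1+\eb_4,\; \eb_2+\eb_4,\; \eb_3+\eb_4$, whose pairwise differences recover $\eb_1,\eb_2,\eb_3,\eb_4$. Passing to $\Qc = \Pc \times [-1,1]^{d-4}$ as in Corollary~\ref{Oberwolfach} adds the lattice points $(\mathbf{0},\pm\eb_j)$ for $5 \leq j \leq d$, which supply the remaining coordinate directions. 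Therefore $\Qc \cap \ZZ^d$ spans $\ZZ^d$ over $\ZZ$, establishing \eqref{span}, and Lemma~\ref{germany} delivers $\depth K[\Qc] = 1$.

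The serious obstacles---producing a non-normal very ample reflexive polytope in every dimension $d \geq 4$, and identifying non-normal very ample polytopes satisfying \eqref{span} as having depth one---have already been settled in Lemma~\ref{ex:non-nor}, Corollary~\ref{Oberwolfach}, and Lemma~\ref{germany}. The one remaining hypothesis, \eqref{span}, is automatic here because the lattice points of $\Qc$ visibly contain a translate of the standard basis of $\ZZ^d$, so no further subtlety arises.
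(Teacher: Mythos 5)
Your proposal is correct and follows exactly the route the paper intends: Corollary~\ref{Oberwolfach} supplies the non-normal very ample reflexive polytope $\Qc$, and Lemma~\ref{germany} (Katth\"an) yields $\depth K[\Qc]=1$. In fact you supply a detail the paper leaves implicit, namely the verification of the spanning condition \eqref{span} via the lattice points $\eb_4,\,\eb_1+\eb_4,\,\eb_2+\eb_4,\,\eb_3+\eb_4$ of $\Pc$ and the unit vectors coming from the factor $[-1,1]^{d-4}$, and that verification is sound.
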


\section{The depth of a reflexive polytope of dimension $4$}
\label{sec:2}
In this section, for a proof of Theorem \ref{Boston}, we give indispensable examples of reflexive polytope of dimension $4$.
According to the following steps, we compute the depth of reflexive polytopes of dimension $4$ by using existing software programs:

\begin{itemize}
	\item[(Step 1)] Take a reflexive polytope $\Pc$ of dimension $4$ in the list \cite{data};
	\item[(Step 2)] Determine the normality of $\Pc$ by using Normaliz \cite{Normaliz};
	\item[(Step 3)] If $\Pc$ is not normal, compute the depth of $K[\Pc]$ by using Macaulay2 \cite{M2};
	\item[(Step 4)] If $K[\Pc]$ is Cohen-Macaulay, i.e., ${\rm depth} K[\Pc]=5$, then determine the normality of the dual polytope $\Pc^{\vee}$ by using Normaliz again.
\end{itemize}

In general, computing the depth of a ring is hard.
In fact, in Step 3, the computation often stops before completing to compute the depth of $K[\Pc]$.
In the case, we restarted Step 1 for other reflexive polytope of dimension $4$.

First, we tried to compute the depth of all reflexive simplices of dimension $4$.
However, for several reflexive simplices, the program could not compute the depth and aborted its computation.
This happened for example for the $176$th reflexive simplex of dimension $4$ in the list \cite{data}, whose normalized volumes is $225$.
In general, for a lattice polytope with large volume or with many lattice points, the toric ring is complicated.
From these reasons we randomly took  a reflexive polytope with $5, 6,7,8$ or $9$ vertice in the list \cite{data} and computed the depth of its toric ring, and we continued these processes until we obtained the following three reflexive polytopes of dimension $4$ described in Example \ref{EX}.

\begin{Example}
\label{EX}
	{\rm
(a) The reflexive polytope $\Pc_1 \subset \RR^4$ of dimension $4$ which is the convex hull of the column vectors of 
\begin{displaymath}
	\begin{bmatrix}
	1&0&0&0 &0&1&1&-2&-3\\
	0&1&0&0&-1&0&-1&-1&-1\\
	0&0&1&0&-1&1&-1&0&0\\
	0&0&0&1&0&0&0&-1&-1
\end{bmatrix}
	\end{displaymath}
satisfies $\depth K[\Pc_1] = 2$.
On the other hand, the dual polytope $\Pc_1^{\vee}$ is normal.
\smallskip
	
	(b) The reflexive polytope $\Pc_2 \subset \RR^4$ of dimension $4$ which is the convex hull of the column vectors of 
	\begin{displaymath}
	\begin{bmatrix}
	1&0&0&0 &-3&3&\\
	0&1&0&0&-2&-1&\\
	0&0&1&0&0&-3&\\
	0&0&0&1&0&-1&
	\end{bmatrix}
	\end{displaymath}
satisfies $\depth K[\Pc_2] = 3$.
On the other hand, the dual polytope $\Pc_2^{\vee}$ is normal.
\smallskip
	
	(c) The reflexive polytope $\Pc_3 \subset \RR^4$ of dimension $4$ which is the convex hull of the column vectors of 
	\begin{displaymath}
	\begin{bmatrix}
	1&0&0&0 &-1&-2&\\
	0&1&0&0&-1&-1&\\
	0&0&1&0&-1&2&\\
	0&0&0&1&0&-2&
	\end{bmatrix}
	\end{displaymath}
satisfies $\depth K[\Pc_3] = 4$.
On the other hand, the dual polytope $\Pc_3^{\vee}$ is normal.}
\end{Example}

\section{Proof of Theorem \ref{Boston}}
\label{sec:3}
In this section, we give a proof of Theorem \ref{Boston}.
Recall that, in general, the {\em bipyramid} of a convex polytope $\Pc \subset \RR^d$ is the convex polytope $\bipyr(\Pc) \subset \RR^{d+1}$ which is the convex hull of 
\[
\{ \, (\ab, 0) \in \RR^{d+1} \, : \, \ab \in \Pc \, \} \cup \{ (0, \ldots, 0, 1), (0, \ldots, 0, - 1) \} \subset \RR^{d+1}. 
\]

\begin{Lemma}
\label{bipyr}
	Let $\Pc \subset \RR^d$ be a reflexive polytope of dimension $d$.
	Then $\bipyr(\Pc) \subset \RR^{d+1}$ is a reflexive polytope of dimension $d + 1$ with
	\begin{eqnarray}
	\label{depth}
	\depth K[\bipyr(\Pc)] = \depth K[\Pc] + 1.
	\end{eqnarray}
\end{Lemma}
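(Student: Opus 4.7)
The plan has two parts: verify $\bipyr(\Pc)$ is reflexive, then establish the depth identity by realizing $K[\bipyr(\Pc)]$ as an explicit quotient of a polynomial extension of $K[\Pc]$.

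For reflexivity, I would read off the facet description of $\bipyr(\Pc)$ from that of $\Pc$. Since $\Pc$ is reflexive, each facet of $\Pc$ is supported by $\langle x, v_F\rangle = 1$ with $v_F \in \ZZ^d$, and lifts to two facets of $\bipyr(\Pc)$ supported by $\langle (x_1,\dots,x_d), v_F\rangle \pm x_{d+1} = 1$. The corresponding vertices $(v_F, \pm 1) \in \ZZ^{d+1}$ of $\bipyr(\Pc)^{\vee}$ are lattice points, and $\mathbf{0}$ lies in the interior of $\bipyr(\Pc)$ because $\mathbf{0} \in \mathrm{int}(\Pc)$. Hence $\bipyr(\Pc)$ is reflexive; incidentally $\bipyr(\Pc)^{\vee} \iso \Pc^{\vee} \times [-1,1]$.

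For the depth identity, set $R = K[\Pc]$, $S = K[\bipyr(\Pc)]$, and let $s \in R$ denote the monomial corresponding to $\mathbf{0} \in \Pc$. Integer heights $c \in [-1,1]$ must satisfy $c \in \{0,\pm 1\}$, and the cross-sections at $c = \pm 1$ collapse to $\{\mathbf{0}\}$, giving $\bipyr(\Pc) \cap \ZZ^{d+1} = (\Pc \cap \ZZ^d)\times\{0\} \cup \{(\mathbf{0},\pm 1)\}$. Thus $S$ is generated over $R$ by $u := t_{d+1} s$ and $v := t_{d+1}^{-1} s$, with the relation $uv = s^2$, and a direct bookkeeping of monomials by $(d+1)$-th coordinate identifies $S \iso R[U,V]/(UV - s^2)$ as $\NN$-graded $K$-algebras. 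I then use $u+v$, which lies in the graded maximal ideal and is a non-zero-divisor because $S$ is a domain and $u+v = s(t_{d+1}+t_{d+1}^{-1}) \neq 0$; setting $V = -U$ gives $S/(u+v) \iso R[U]/(U^2 + s^2)$. Since $U^2 + s^2$ is monic in $U$, it is a non-zero-divisor in the polynomial ring $R[U]$, so $\depth R[U]/(U^2+s^2) = \depth R[U] - 1 = \depth R$. Chaining the two reductions yields $\depth S = \depth R + 1$.

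The main obstacle is the verification that $\depth R[U]/(U^2+s^2) = \depth R$, rather than $\depth R+1$. The inequality $\geq \depth R$ is immediate from freeness of $R[U]/(U^2+s^2)$ as a rank-$2$ $R$-module. For the reverse, after modding out a maximal $R$-regular sequence $r_1,\dots,r_n$ of length $n = \depth R$, I would exhibit a socle element: take $\bar y$ generating the socle of $R/(r_1,\dots,r_n)$; then $\bar y\,U$ is annihilated by the full graded maximal ideal of the quotient, because it is killed by $\mathfrak{m}_R$ (as $\bar y$ is) and by $U$ (since $U \cdot \bar y\,U = -\bar y\,\bar s^{\,2} = 0$, using $\bar s \in \mathfrak{m}_R$). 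Carrying out this socle argument cleanly is the technical core of the proof.
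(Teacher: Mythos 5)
Your proof is correct and takes essentially the same route as the paper: both rest on the isomorphism $K[\bipyr(\Pc)] \cong (K[\Pc])[y,z]/(yz-s^2)$, from which the paper simply asserts that the depth formula follows. Your reduction by the regular element $u+v$ followed by the socle argument for $R[U]/(U^2+s^2)$ is a sound filling-in of exactly the step the paper leaves to the reader.
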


\begin{proof}
If $V \subset \ZZ^d$ is the set of vertices of the dual polytope $\Pc^\vee$, then that of $(\bipyr(\Pc))^\vee$ is $\{ \, (\ab, \pm 1) \in \RR^{d+1} \, : \, \ab \in V \, \}$.  In particular $(\bipyr(\Pc))^\vee$ is a lattice polytope.  Hence $\bipyr(\Pc) \subset \RR^{d+1}$ is a reflexive polytope of dimension $d + 1$.  Furthermore, since 
\[
K[\bipyr(\Pc)] \cong (K[\Pc])[\,y, z\,]/(yz - s^2),
\]
the required formula (\ref{depth}) follows.
\end{proof}

The bipyramid $\bipyr(\Pc)$ of a reflexive polytope $\Pc \subset \RR^d$ of dimension $d$ is normal if and only if $\Pc$ is normal.  However, Lemma \ref{bipyr} says that the bipyramid of a non-normal very ample reflexive polytope cannot be very ample.  In fact, 
 
\begin{Proposition}
\label{KIX}
	Let $\Pc \subset \RR^d$ be a reflexive polytope of dimension $d$. 
	Then the following conditions are equivalent{\em :}
	\begin{enumerate}
		\item[{\rm (i)}] $\Pc$ is normal{\em ;}
		\item[{\rm (ii)}] $\bipyr(\Pc)$ is normal{\em ;}
		\item[{\rm (iii)}] $\bipyr(\Pc)$ is very ample. 
	\end{enumerate}
\end{Proposition}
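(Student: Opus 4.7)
My plan is to observe that the Proposition contains three implications but only one piece of new content. The equivalence (i) $\Leftrightarrow$ (ii) is the known fact stated immediately before the Proposition, and the implication (ii) $\Rightarrow$ (iii) is automatic since every normal lattice polytope is very ample by definition. Thus it suffices to establish (iii) $\Rightarrow$ (ii), and I intend to deduce it by contradiction from the depth formula of Lemma \ref{bipyr} together with the Katth\"an rigidity theorem (Lemma \ref{germany}).

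Concretely, assume that $\bipyr(\Pc)$ is very ample and, toward a contradiction, that $\bipyr(\Pc)$ is not normal. Lemma \ref{bipyr} guarantees that $\bipyr(\Pc) \subset \RR^{d+1}$ is a reflexive polytope of dimension $d+1$, so in particular a lattice polytope of full dimension. Before invoking Lemma \ref{germany}, I would verify that $\bipyr(\Pc)$ satisfies the spanning condition (\ref{span}) inside $\ZZ^{d+2}$. Using the three distinguished lattice points $(\mathbf{0},\pm 1)$ and $(\mathbf{0},0)$ of $\bipyr(\Pc)$ (the latter available because $\Pc$ contains the origin in its interior), together with $(\ab,0)$ for each $\ab \in \Pc \cap \ZZ^d$, one shows by a short difference calculation on the lifts $(\cdot, 1) \in \ZZ^{d+2}$ that the spanning condition for $\bipyr(\Pc)$ reduces to the analogous condition for $\Pc$, which is a standard property of reflexive polytopes.

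With the spanning condition secured, Lemma \ref{germany} applies and forces $\depth K[\bipyr(\Pc)] = 1$. On the other hand, Lemma \ref{bipyr} gives $\depth K[\bipyr(\Pc)] = \depth K[\Pc] + 1$, and since $K[\Pc]$ is a positively graded integral domain of positive Krull dimension $d+1$, any nonzero homogeneous element of positive degree in the graded maximal ideal is a nonzerodivisor, so $\depth K[\Pc] \geq 1$. Consequently $\depth K[\bipyr(\Pc)] \geq 2$, contradicting the value $1$ produced by Lemma \ref{germany}. This contradiction rules out the non-normal case and establishes (iii) $\Rightarrow$ (ii), completing the chain (i) $\Leftrightarrow$ (ii) $\Leftrightarrow$ (iii).

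The only delicate point in this outline is the spanning-condition verification, and even this is essentially immediate from how the lattice points of $\bipyr(\Pc)$ are built from those of $\Pc$. Apart from that bookkeeping, the argument is a clean packaging of the depth formula of Lemma \ref{bipyr} with Katth\"an's rigidity result, and no combinatorial analysis of the facets or of the $N$-th dilations of $\bipyr(\Pc)$ is needed.
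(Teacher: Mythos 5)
Your overall architecture coincides with the paper's: (i) $\Leftrightarrow$ (ii) and (ii) $\Rightarrow$ (iii) are quoted facts, and (iii) $\Rightarrow$ (ii) is obtained by playing the depth formula of Lemma \ref{bipyr} (which forces $\depth K[\bipyr(\Pc)] = \depth K[\Pc] + 1 \geq 2$, since $K[\Pc]$ is a positively graded domain of positive dimension) against Katth\"an's Lemma \ref{germany}; whether one phrases this as a contradiction or as a contrapositive is immaterial. However, the step you yourself single out as ``the only delicate point'' is handled incorrectly. You justify hypothesis (\ref{span}) for $\bipyr(\Pc)$ by reducing it (correctly) to the statement that the lattice points of $\Pc$ generate $\ZZ^d$, and you then assert that this ``is a standard property of reflexive polytopes.'' That assertion is false. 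For example, the simplex $\conv\{\eb_1,\dots,\eb_4,\,-\eb_1-\eb_2-\eb_3-\eb_4\}$, viewed with respect to the lattice $N=\ZZ^4+\ZZ\cdot\tfrac{1}{5}(1,2,3,4)$ (equivalently, after a change of basis identifying $N$ with $\ZZ^4$, an honest reflexive polytope in $\RR^4$; this is the mirror--quintic polytope), is reflexive, yet its only lattice points are its five vertices and the origin, and these generate a sublattice of index $5$. So reflexivity alone cannot deliver condition (\ref{span}), and your verification as written fails.

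The gap is repairable, and the repair is exactly what the paper does: condition (\ref{span}) must be extracted from hypothesis (iii) itself, not from reflexivity. If $\bipyr(\Pc)$ is very ample and full-dimensional, then for every $N \gg 0$ each lattice point of $N\cdot\bipyr(\Pc)$ is a sum of $N$ lattice points of $\bipyr(\Pc)$, so the group $G$ generated by the lifts $(\bb,1)$, $\bb \in \bipyr(\Pc)\cap\ZZ^{d+1}$, contains all points $(\xb,N)$ with $\xb \in N\cdot\bipyr(\Pc)\cap\ZZ^{d+1}$. For $N$ large, $N\cdot\bipyr(\Pc)$ contains a translate of the unit cube, so differences of such points give all of $\ZZ^{d+1}\times\{0\}$; comparing one point at level $N$ with one at level $N+1$ then produces an element with last coordinate $1$, whence $G=\ZZ^{d+2}$. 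With that substitution your argument becomes the paper's proof.
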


\begin{proof}
One has (i) $\Leftrightarrow$ (ii) $\Rightarrow$ (iii).  Suppose that $\bipyr(\Pc)$ is very ample. 
Then $\bipyr(\Pc)$ satisfies the condition (\ref{span}).
 Lemma \ref{bipyr} says that $\depth K[\bipyr(\Pc)] > 1$.  It then follows from Lemma \ref{germany} that $\bipyr(\Pc)$ must be normal.
Hence (iii) $\Rightarrow$ (ii) follows.
\end{proof}

Recall that the {\em lattice pyramid} of a lattice polytope $\Pc \subset \RR^d$ is the convex polytope
${\rm pyr(\Pc)}$ which is the convex hull of 
\[
\{ \, (\ab, 0) \in \RR^{d+1} \, : \, \ab \in \Pc \, \} \cup \{ (0, \ldots, 0, 1) \} \subset \RR^{d+1}. 
\]
By the same proof of Proposition \ref{KIX}, we can show the following.

\begin{Proposition}
	\label{KIX2}
	Let $\Pc \subset \RR^d$ be a lattice polytope of dimension $d$. 
	Then the following conditions are equivalent{\em :}
	\begin{enumerate}
		\item[{\rm (i)}] $\Pc$ is normal{\em ;}
		\item[{\rm (ii)}] ${\rm pyr}(\Pc)$ is normal{\em ;}
		\item[{\rm (iii)}] ${\rm pyr}(\Pc)$ is very ample. 
	\end{enumerate}
\end{Proposition}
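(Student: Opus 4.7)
The plan is to mirror the proof of Proposition \ref{KIX}, replacing the bipyramid by the lattice pyramid and the hypersurface presentation of the toric ring by a polynomial extension.

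First I would establish the pyramid analogue of Lemma \ref{bipyr}: for any lattice polytope $\Pc \subset \RR^d$ of dimension $d$, one has $\depth K[{\rm pyr}(\Pc)] = \depth K[\Pc] + 1$. The apex $(0, \ldots, 0, 1)$ contributes a single new generator $ys$ to the toric ring, where the coordinate variable $y$ is algebraically independent of $t_1, \ldots, t_d, s$. This yields an isomorphism $K[{\rm pyr}(\Pc)] \cong K[\Pc][z]$ with $z$ mapping to $ys$, so the toric ring of the pyramid is a polynomial extension of $K[\Pc]$ by one free variable, and depth increases by exactly one.

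Next I would treat (i) $\Leftrightarrow$ (ii) directly. Every lattice point of $N\,{\rm pyr}(\Pc)$ has the form $(\ab, k)$ with $0 \leq k \leq N$ and $\ab \in (N - k)\Pc \cap \ZZ^d$. If $\Pc$ is normal, decomposing $\ab$ into $N - k$ lattice points of $\Pc$ at height $0$ and adjoining $k$ copies of the apex gives an $N$-term decomposition of $(\ab, k)$ in ${\rm pyr}(\Pc)$. Conversely, in any decomposition of $(\ab, 0) \in N\,{\rm pyr}(\Pc)$ the last coordinates are non-negative and sum to zero, forcing every summand to lie at height $0$, and hence producing a decomposition of $\ab$ inside $\Pc$.

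The implication (ii) $\Rightarrow$ (iii) is immediate from the definitions. For (iii) $\Rightarrow$ (ii) I would follow Proposition \ref{KIX} verbatim: if ${\rm pyr}(\Pc)$ is very ample, then it satisfies the spanning condition (\ref{span}), and combined with $\depth K[{\rm pyr}(\Pc)] \geq 2 > 1$ from the first step, Lemma \ref{germany} forces ${\rm pyr}(\Pc)$ to be normal, since otherwise a non-normal very ample polytope satisfying (\ref{span}) would have depth $1$. The main obstacle I expect is the same as in Proposition \ref{KIX}: confirming that (\ref{span}) holds for ${\rm pyr}(\Pc)$ whenever it is very ample. This assertion is used implicitly in the bipyramid proof and carries over here by the same reasoning, so no genuinely new difficulty arises beyond the polynomial-extension replacement of the hypersurface description $K[\Pc][y,z]/(yz - s^2)$ used for the bipyramid.
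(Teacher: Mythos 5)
Your proposal is correct and is essentially the paper's own argument: the paper proves Proposition~\ref{KIX2} simply by declaring it follows ``by the same proof of Proposition~\ref{KIX}'', and your write-up is exactly that proof spelled out, with the single required modification that the hypersurface presentation $K[\bipyr(\Pc)] \cong (K[\Pc])[y,z]/(yz-s^2)$ is replaced by the polynomial extension $K[{\rm pyr}(\Pc)] \cong K[\Pc][z]$ to get $\depth K[{\rm pyr}(\Pc)] = \depth K[\Pc] + 1 \geq 2$, after which the span condition~(\ref{span}) and Lemma~\ref{germany} give (iii) $\Rightarrow$ (ii) just as in the bipyramid case.
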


Before proving Theorem \ref{Boston}, we show the following lemma.
\begin{Lemma}
\label{DUAL}
Let $\Pc \subset \RR^d$ be a reflexive polytope of dimension $d$ and suppose that its dual polytope $\Pc^\vee \subset \RR^d$ is normal.  Then each of the dual polytopes $(\Pc \times [-1,1])^\vee$ and $(\bipyr(\Pc))^\vee$ is normal.
\end{Lemma}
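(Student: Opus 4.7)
The plan is to identify both dual polytopes $(\Pc \times [-1,1])^\vee$ and $(\bipyr(\Pc))^\vee$ explicitly in terms of $\Pc^\vee$, and then deduce normality by combining Proposition \ref{KIX} with the elementary fact that products of normal lattice polytopes are normal.

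First I would establish the ``swap'' identities
\[
(\Pc \times [-1,1])^\vee = \bipyr(\Pc^\vee), \qquad (\bipyr(\Pc))^\vee = \Pc^\vee \times [-1,1].
\]
For the first, the standard duality for Cartesian products and free sums gives $(\Pc \times [-1,1])^\vee = \Pc^\vee \oplus [-1,1]^\vee$; since $[-1,1]^\vee = [-1,1]$, this free sum is precisely $\bipyr(\Pc^\vee)$ by definition. (Alternatively, a direct inequality check using $\langle (\xb',s), (\yb',t)\rangle = \langle \xb',\yb'\rangle + st \leq 1$ for all $\yb' \in \Pc$, $t \in [-1,1]$ gives the same description.) For the second identity, the proof of Lemma \ref{bipyr} already pinpoints the vertex set of $(\bipyr(\Pc))^\vee$ as $\{(\ab,\pm 1) : \ab \in V\}$, where $V$ is the vertex set of $\Pc^\vee$; the convex hull of this set is evidently the prism $\Pc^\vee \times [-1,1]$.

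With these identifications in hand, the argument concludes in two short parts. Since $\Pc$ is reflexive, its dual $\Pc^\vee$ is itself reflexive: $(\Pc^\vee)^\vee = \Pc$ is a lattice polytope and the origin lies in the interior of $\Pc^\vee$. Hence Proposition \ref{KIX} applied to the normal reflexive polytope $\Pc^\vee$ yields that $\bipyr(\Pc^\vee)$ is normal, settling the case of $(\Pc \times [-1,1])^\vee$. For $(\bipyr(\Pc))^\vee = \Pc^\vee \times [-1,1]$, the segment $[-1,1]$ is normal (as is every one-dimensional lattice polytope) and $\Pc^\vee$ is normal by assumption; the product of two normal lattice polytopes is normal, since for $(\ab,b) \in N(\Pc^\vee \times [-1,1]) \cap \ZZ^{d+1} = (N\Pc^\vee \cap \ZZ^d) \times ([-N,N] \cap \ZZ)$ one decomposes $\ab$ and $b$ separately into $N$ lattice summands and pairs them componentwise.

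I do not anticipate a genuine obstacle here: the content of the lemma sits entirely in correctly identifying the two duals, after which Proposition \ref{KIX} and the stability of normality under Cartesian products do all the work.
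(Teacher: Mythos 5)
Your proposal is correct and follows exactly the paper's route: the paper's entire proof consists of the two swap identities $(\Pc \times [-1,1])^\vee = \bipyr(\Pc^\vee)$ and $(\bipyr(\Pc))^\vee = \Pc^\vee \times [-1,1]$, after which it declares the result immediate. You have simply, and correctly, filled in the details the paper leaves implicit -- the free-sum/product duality behind the identities, normality of $\bipyr(\Pc^\vee)$ via Proposition \ref{KIX} applied to the reflexive polytope $\Pc^\vee$, and the componentwise decomposition showing that a product of normal lattice polytopes is normal.
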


\begin{proof}
One has $(\Pc \times [-1,1])^\vee = \bipyr(\Pc^\vee)$ and $(\bipyr(\Pc))^\vee = \Pc^\vee \times [-1,1]$.  Thus the desired result follows immediately.
\end{proof}

We are now in the position to give a proof of Theorem \ref{Boston}.
  
\begin{proof}[Proof of Theorem \ref{Boston}]
Let $d = 4$.  It follows from a routine computation that the dual polytope of the reflexive polytope of Lemma \ref{ex:non-nor} as well as the dual polytope of each of the reflexive polytopes (a), (b) and (c) of Example \ref{EX} is normal.  Corollary \ref{depth1} with $d = 4$ together with Example \ref{EX} then guarantees the existence of a reflexive polytope $\Pc \subset \RR^4$ of dimension $4$ with $\depth K[\Pc] = r$ for which $\Pc^\vee$ is normal, where $1 \leq r \leq 4$.  Furthermore, the cube $C_4 = [-1,1]^{4} \subset \RR^4$ is a reflexive polytope of dimension $4$ with $\depth K[C_4] = 5$ and $C_4^\vee$ is normal.  Thus a proof for $d = 4$ is done.  

Let $d \geq 5$.  The proof of Corollary \ref{Oberwolfach} together with Lemma \ref{DUAL} yields a non-normal very ample reflexive polytope $\Pc \subset \RR^d$ of dimension $d$ for which $\Pc^\vee$ is normal and $\depth K[\Pc] = 1$ (Corollary \ref{depth1}).  Now, suppose that, for each $1 \leq r \leq d$, there is a reflexive polytope $\Pc \subset \RR^{d-1}$ of dimension $d - 1$ with $\depth K[\Pc] = r$ for which $\Pc^\vee$ is normal.  Lemma \ref{bipyr} together with Lemma \ref{DUAL} then yields a reflexive polytope $\Pc \subset \RR^{d}$ of dimension $d$ with $\depth K[\Pc] = r$ for each $2 \leq r \leq d + 1$ for which $\Pc^\vee$ is normal.  This completes a proof of Theorem \ref{Boston}.  
\end{proof}

\end{document}